\newcommand\N{\mathbb N}
\newcommand\R{\mathbb R}
\newcommand\ph\varphi
\newcommand\ps\psi
\newcommand\ep\varepsilon
\newcommand\rh\varrho
\newcommand\al\alpha
\newcommand\be\beta
\newcommand\ga\gamma
\newcommand\om\omega
\newcommand\ta\tau
\renewcommand\th\theta
\newcommand\de\delta
\newcommand\ze\zeta
\newcommand\ch\chi
\newcommand\et\eta
\newcommand\io\iota
\newcommand\la\lambda
\newcommand\si\sigma
\newcommand\Ga\Gamma
\newcommand\De\Delta
\newcommand\Th\Theta
\newcommand\La\Lambda
\newcommand\Si\Sigma
\newcommand\Ph\Phi
\newcommand\Ps\Psi
\newcommand\Om\Omega
\newtheorem{theorem}{Theorem}
\theoremstyle{definition}
\newtheorem{example}[theorem]{Example}
\theoremstyle{remark}
\newtheorem{remark}[theorem]{Remark}
\begin{document}
\title[Non-commutative polytopes and polyhedra]
{A note on non-commutative polytopes and polyhedra}

\author{Beatrix Huber}
\author{Tim Netzer}
\address{University of Innsbruck, Department of Mathematics, Innsbruck, Austria}

\thanks{Supported by the Austrian Science Fund FWF through project P 29496-N35}

\begin{abstract} It is well-known that every polyhedral cone is finitely generated (i.e. polytopal), and vice versa. Surprisingly, the two notions differ almost always for non-commutative versions of such cones. This was obtained as a byproduct in \cite{fnt} and later generalized in \cite{PSS}. In this note we give a direct and constructive proof of the statement. Our proof yields a new and surprising quantitative result: the difference of the two notions can always be seen at the first level of non-commutativity, i.e. for matrices of size $2$, independent of dimension and  complexity of the initial convex cone. This also  answers an open question from \cite{PSS}.
\end{abstract}

\maketitle

\section{Introduction and Preliminaries}
A convex cone $C\subseteq\R^d$ is called  {\it polyhedral}, if there exist linear functionals $\ell_1,\ldots,\ell_m\colon\R^d\to\R$ with $$C=\left\{ a\in\R^d\mid \ell_1(a)\geq 0, \ldots, \ell_m(a)\geq 0\right\}.$$  A convex cone $C$ is called {\it finitely generated} (or {\it polytopal}) if there are $v_1,\ldots, v_n\in\R^d$ with $$C={\rm cc}\left\{ v_1,\ldots, v_n\right\}:=\left\{ \sum_{i=1}^n \lambda_iv_i\mid \lambda_1,\ldots, \lambda_n \geq 0 \right\}.$$ The Minkowski-Weyl-Theorem (see for example \cite{sch}) states that each polyhedral cone is finitely generated, and each finitely generated cone is polyhedral.

A recent development in real algebraic geometry and convexity theory is to consider {\it non-commutative} sets and cones. They arise by replacing points from $\R^d$ with $d$-tuples of Hermitian matrices (of arbitrary size). A lot of meaningful information about polynomials and semialgebraic sets comes to light when these non-commutative levels are added to the classical setup. Examples are Helton's Positiv-stellensatz \cite{h} and  the analysis of Ben-Tal and Nemirovski's algorithm for checking inclusion of spectrahedra \cite{bn,fnt,hkm}, among others (see also \cite{hkm2} for an overview). 
For a polyhedral/polytopal cone $$C=\left\{ a\in\R^d\mid \ell_1(a)\geq 0, \ldots, \ell_m(a)\geq 0\right\}={\rm cc}\left\{ v_1,\ldots, v_n\right\}$$ there are two natural ways to extend the cone to matrix levels. The first one uses the polyhedral description, and is the standard way of defining non-commutative semialgebraic sets by polynomial inequalities. For each $s\in\N$  we define $$C_s^{\rm ph} :=\left\{ (A_1,\ldots,A_d)\in {\rm Her}_s^d\mid \ell_i(A_1,\ldots, A_d)\geqslant 0, i=1,\ldots, m\right\},$$ where ${\rm Her}_s$ is the real vector space of complex Hermitian $s\times s$-matrices, and $\geqslant 0$ means that a matrix is positive semidefinite. Note that the above definition makes sense, since a real linear polynomial can be evaluated at a tuple of Hermitian matrices, and the result is again Hermitian. Also note that $C_1^{\rm ph}$ coincides with $C$. We now consider the collection over all matrix-sizes as our {\it non-commutative polyhedral extension of $C$}:
  $$C^{\rm ph} := \left( C_s^{\rm ph}\right)_{s\in\N}.$$ 
  The second non-commutative extension of $C$ uses the generators of $C$ and looks a little less natural at first sight. However, there are good reasons for the following definition, as we will argue below. We just replace nonnegative numbers by positive semidefinite matrices and define for any $s\in \N:$ $$C_s^{\rm pt}:=\left\{ \sum_{i=1}^n P_i\otimes v_i\mid P_i\in {\rm Her}_s, P_i\geqslant 0, i=1,\ldots, n \right\}.$$ Here, $\otimes$ denotes the Kronecker (=tensor) product of matrices. In our case it just means we put $P_i$ into each component of the vector $v_i$ and multiply it with the real number in there. The result is a $d$-tuple of Hermitian matrices of size $s$, and so is the sum over all $i$. Also note that $C_1^{\rm pt}$ again coincides with $C$, since positive semidefinite matrices of size $1$ are just nonnegative real numbers. Now the collection $$C^{\rm pt}:=\left( C_s^{\rm pt}\right)_{s\in\N}$$ is the {\it non-commutative polytopal extension of $C$}. 
  
  We will restrict to {\it proper convex cones} from now on, i.e. closed convex cones $C$ with nonempty interior and $C\cap -C=\{0\}.$ Then all $C_s^{\rm ph}$ and $C_s^{\rm pt}$ have the same property, and they fit well into the context of  {\it operator systems} (see \cite{fnt} and the references therein for details). In fact both $C^{\rm ph}$ and $C^{\rm pt}$are  abstract operator systems with $C$ at scalar level, and in particular convex in the non-commutative sense. It is easily seen (and shown in \cite{fnt}) that $C^{\rm ph}$ is the largest operator system with $C$ at scalar level, and $C^{\rm pt}$ is the smallest such operator system. In particular we have $C_s^{\rm pt}\subseteq C_s^{\rm ph}$ for all $s$ (which can also be easily  checked directly).

 \section{Main Result} 
Theorem \ref{thm:main} below is the main result of these notes. Without the information about the matrix size $2$, the result is a byproduct of the main results of \cite{fnt} (see Remark 4.9 from that work). However, the proof there is quite involved and non-constructive, in particular since the focus is on a different property of operator systems.  See also Remark \ref{rem:just} below for more comments on the difference of the two proofs.
The  main result was later generalized in Theorem 4.1\ from \cite{PSS}, to include the case that  $C$ is not polyhedral. It is also shown there that the difference of the cones can always be seen at level $2^{d-1}$.  In Problem 4.3\ the authors then ask whether this bound can be improved.  We now give  direct, simple and completely constructive  proof of the main result. It also answers Problem 4.3 in proving the somewhat surprising result about the matrix size $2$. 
  
 \begin{theorem}\label{thm:main} Let $C\subseteq\R^d$ be a proper polyhedral cone. 

(i)  If $C$ is a simplex cone, then $C^{\rm pt}=C^{\rm ph}$. 

(ii) If $C$ is not a simplex cone, then $C_2^{\rm pt}\subsetneq C_2^{\rm ph}.$
 \end{theorem}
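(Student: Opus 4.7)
\medskip\noindent\textbf{Plan of proof.}

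Part (i) is straightforward. After a linear change of coordinates sending the $d$ extreme rays of the simplex cone $C$ to the standard basis of $\R^d$, $C$ becomes $\R_{\geq 0}^d$ and the facet functionals become the coordinate projections. Since both $C^{\rm pt}$ and $C^{\rm ph}$ are functorial at scalar level under linear isomorphisms, we may assume this normal form; then $C_s^{\rm pt}$ and $C_s^{\rm ph}$ both reduce to $\{(P_1,\dots,P_d)\in\mathrm{Her}_s^d:P_i\succeq 0\}$.

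For part (ii) I would build an explicit witness in $C_2^{\rm ph}\setminus C_2^{\rm pt}$. Non-simplicity supplies a nontrivial relation $\sum_i c_iv_i=0$ among the extreme rays, and minimality together with extremality forces both sign classes $I^{\pm}=\{i:\pm c_i>0\}$ to contain at least two indices (otherwise some extreme ray would be a nonnegative combination of the remaining ones). Parametrize real symmetric $2\times 2$ matrices via the Pauli decomposition $A=\alpha I+\beta X+\gamma Z$, so $A\succeq 0$ iff $\alpha\geq\sqrt{\beta^2+\gamma^2}$; correspondingly, a tuple $(A_j=\alpha_jI+\beta_jX+\gamma_jZ)_j$ lies in $C_2^{\rm ph}$ iff the affine ellipse
\[
E\;=\;\bigl\{\vec\alpha+(\cos\theta)\vec\beta+(\sin\theta)\vec\gamma:\theta\in[0,2\pi)\bigr\}
\]
is contained in $C$. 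For the witness, take $\vec\alpha$ proportional to $w:=\sum_{I^+}c_iv_i=\sum_{I^-}|c_i|v_i\in C$, and choose $\vec\beta,\vec\gamma$ spanning a 2-plane so that $E$ is inscribed in a 2-dimensional cross-section of $C$ and tangent to at least four facets---the guiding model being the unit disk inscribed in the square cross-section of the square cone in $\R^3$.

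PH-membership is immediate from $E\subseteq C$. For PT non-membership, expand each $P_i=p_iI+q_iX+r_iY+t_iZ$ in a hypothetical decomposition $A_j=\sum_iP_i(v_i)_j$. Since the target has no $Y$-component, the $r_i$'s lie in $\ker V$ (with $V$ the $d\times n$ matrix with columns $v_i$), and since enlarging $\vec r$ only tightens the constraint $p_i^2\geq q_i^2+r_i^2+t_i^2$, we may set $\vec r=0$. The problem reduces to finding $\vec p\geq 0$ and $\vec q,\vec t\in\R^n$ with $V\vec p=\vec\alpha$, $V\vec q=\vec\beta$, $V\vec t=\vec\gamma$, and $p_i^2\geq q_i^2+t_i^2$ for all $i$. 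The circuit supplies a nontrivial direction in $\ker V$; summing the Pythagorean inequalities over the circuit with signs matching the $c_i$ should yield a strict contradiction with the linear constraints on $\vec p,\vec q,\vec t$, essentially because the ``area'' of the inscribed $E$ exceeds what the extreme-ray contributions can cover.

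The main obstacle is carrying out this infeasibility argument uniformly across all combinatorial types of non-simplex cones. The cleanest route is likely to work first inside the face of $C$ containing $\vec\alpha$ in its relative interior (arranging that $\vec\beta,\vec\gamma$ are parallel to this face), which forces any $C_2^{\rm pt}$-decomposition to use only extreme rays of this face; the problem then localizes to a subcone essentially generated by the minimal circuit, where the Pythagorean infeasibility can be established by a direct computation modeled on the square cone.
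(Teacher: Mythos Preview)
Your Part~(i) is correct and coincides with the paper's argument.

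For Part~(ii) your setup is sound---the Pauli parametrization, the characterization of $C_2^{\rm ph}$ via the inscribed ellipse $E\subseteq C$, and the reduction of $C_2^{\rm pt}$-membership to the system $V\vec p=\vec\alpha,\ V\vec q=\vec\beta,\ V\vec t=\vec\gamma,\ p_i^2\ge q_i^2+t_i^2$ are all correct---but the decisive step is not carried out. You never specify $\vec\beta,\vec\gamma$, and the infeasibility argument remains a hope: ``summing the Pythagorean inequalities over the circuit with signs matching the $c_i$'' cannot be taken literally, since negative coefficients would reverse inequalities, and no replacement is supplied. Your proposed localization is correct as stated (if $\vec\alpha$ lies in a face $F$ then any $C_2^{\rm pt}$-decomposition uses only extreme rays of $F$), but it does not reduce the problem: the smallest face containing $w=\sum_{I^+}c_iv_i=\sum_{I^-}|c_i|v_i$ must contain every circuit ray, and will typically be all of $C$ (already for the cone over a quadrilateral $w$ is interior), so a putative decomposition may still involve \emph{every} extreme ray of $C$, not just the circuit rays. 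For $d\ge 4$ the minimal circuit can moreover have more than four elements, so even the ``model computation'' is not uniformly the square cone.

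The paper takes a different and fully constructive route that avoids these obstacles. In the base case $d=3$ it \emph{enlarges} $C$ to a four-generator cone $D\supseteq C$ (a circumscribed quadrilateral cone with one vertex pushed out), so that $C_2^{\rm pt}\subseteq D_2^{\rm pt}$, and then verifies by a short explicit linear-algebra computation that the Pauli witness $\underline A=(Z,X,I)$ is not in $D_2^{\rm pt}$; this single computation disposes of all extra generators of $C$ at once. For $d>3$ it inducts on dimension using the combinatorial fact that a non-simplex polyhedral cone has either a non-simplex facet or a non-simplex vertex figure, and in each case lifts the lower-dimensional witness by adjoining a zero first coordinate. If you want to salvage your circuit approach, the missing ingredient is precisely an analogue of this ``enlarge to a four-generator cone containing $C$'' trick, or some other device that controls the contribution of the non-circuit extreme rays.
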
 
 \begin{proof}
Statement ($i$) is easy. The argument is the same as in \cite{fnt}, we repeat it for completeness. If $C$ is a simplex cone, then up to a linear isomorphism of the underlying space $\R^d$ we can assume $C=\R_{\geq 0}^d,$ the positive orthant. In that case one readily checks $$C_s^{\rm ph}=\left\{(A_1,\ldots, A_d)\in{\rm Her}_s^d\mid A_i\geqslant 0, i=1,\ldots, d\right\}=C_s^{\rm pt}$$ for all $s\in\N$. 

For ($ii$) assume that $C$ is not a simplex cone. We first settle the case of smallest possible dimension, namely $d=3$. Since $C$ is proper and has at least $4$ extremal rays, after applying a linear isomorphism we can assume that $C$ is generated by $$v_1=(1,-1,1), v_2=(-1,-1,1), v_3=(-1,1,1), v_4=(1,1,1)$$ and some $v_5,\ldots, v_n\in (1,\infty) \times (-1,1) \times\{1\}$ (see for example \cite{go} Section 2.8.1.\ for an explicit construction of such an isomorphism and Figure \ref{fig:poly} for the intersection of the  cone $C$ with the plane defined by $x_3=1$).

\begin{center}
\begin{figure}[h!]
\begin{tikzpicture}[scale=1.5]
\draw[->](-1.3,0)--(3.5,0);\put (155,0) {$x_1$};
\draw[->](0,-1.3)--(0,1.3);\put (0,60) {$x_2$};
\draw[red,thick]  (1,1)--(4.8,-1);
\draw[red,thick]  (-1,-1)--(4.8,-1);
\draw[red,thick]  (1,1)--(-1,1)--(-1,-1)--(1,-1);
\filldraw[red, opacity=0.3]  (1,1) -- (-1,1) -- (-1,-1) -- (1,-1) -- (4.8,-1)-- cycle;
\draw[thick,blue,opacity=0.6]
 (1,1) -- (-1,1) -- (-1,-1) -- (1,-1) -- (1.3,-0.8) -- (1.8,-0.1) -- (1.7,0.4) -- (1.45,0.7)-- cycle;
\filldraw[blue, opacity=0.4]
 (1,1) -- (-1,1) -- (-1,-1) -- (1,-1) -- (1.3,-0.8) -- (1.8,-0.1) -- (1.7,0.4) -- (1.45,0.7)-- cycle;
\filldraw (4.8,-1) circle (0.8pt);\put (200,-50) {$w$};
\filldraw (1,-1) circle (0.8pt); \put (40,-50) {$v_1$};
\filldraw (-1,-1) circle (0.8pt); \put (-45,-50) {$v_2$};
\filldraw (-1,1) circle (0.8pt); \put (-45,47) {$v_3$};
\filldraw (1,1) circle (0.8pt); \put (40,47) {$v_4$};
\filldraw  (1.45,0.7) circle (0.8pt); \put (65,33) {$v_5$};
\filldraw (1.7,0.4) circle (0.8pt);  \put (76,15) {$v_6$};
\filldraw (1.8,-0.1)  circle (0.8pt);  \put (83,-5) {$\hdots$};
\filldraw (1.3,-0.8) circle (0.8pt); \put (60,-35) {$v_n$};
\end{tikzpicture}
\caption{section with plane $x_3=1$ of $C$ (blue) and  $D$ (red)}\label{fig:poly}
\end{figure}
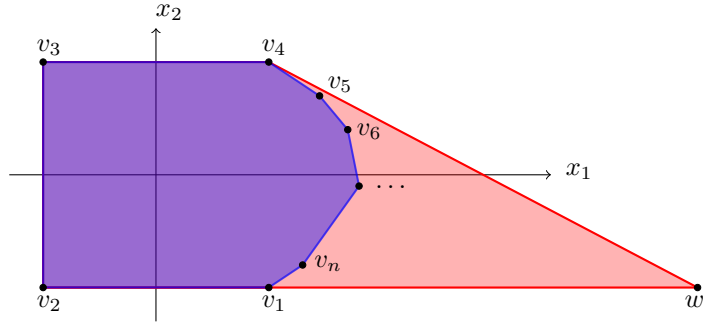
\end{center}
 
\vspace{-0.5cm}
 We now consider the matrix tuple $$\underline A:=(A_1, A_2,A_3):=\left(\left(\begin{array}{cc}1 & 0 \\0 & -1\end{array}\right), \left(\begin{array}{cc}0 & 1 \\1 & 0\end{array}\right),\left(\begin{array}{cc}1 & 0 \\0 & 1\end{array}\right)\right)\in{\rm Her}_2^3$$ and claim that $\underline A\in C_2^{\rm ph}.$ It is easily checked that $\underline A$ even fulfills  $$A_3\pm A_1\geqslant 0, \quad A_3\pm A_2\geqslant 0,$$ and by Farkas Lemma \cite{far} in particular the inequalities defining $C$.
 
 Let us prove  $\underline A\notin C_2^{\rm pt}.$ First choose another point $w=(\lambda,-1,1)$ with $\lambda$ so large that $$C\subseteq {\rm cc}\{w,v_2,v_3,v_4\}=:D$$ (see Figure \ref{fig:poly}). We now even prove $\underline A\notin D_2^{\rm pt}.$ Assume to the contrary that  there exists positive semidefinite matrices $P_1,P_2,P_3,P_4\in {\rm Her}_2$ with \begin{align*}\underline A&=(A_1,A_2,A_3)\\ &=P_1\otimes w+P_2\otimes v_2+P_3\otimes v_3+P_4\otimes v_4\\ &=(\lambda P_1-P_2-P_3+P_4,-P_1-P_2+P_3+P_4,P_1+P_2+P_3+P_4).\end{align*}
 Adding the first and third entry we obtain \begin{equation}\label{first}\left(\begin{array}{cc}2 & 0 \\0 & 0\end{array}\right)=A_1+A_3=(1+\lambda)P_1+2P_4,\end{equation} which implies $$P_1=\left(\begin{array}{cc}\alpha_1 & 0 \\0 & 0\end{array}\right), \quad P_4=\left(\begin{array}{cc}\alpha_4 & 0 \\0 & 0\end{array}\right) $$ for some $\alpha_1,\alpha_4\geq 0$, since $P_1,P_4\geqslant 0.$  Similarly we get $$\left(\begin{array}{cc}1 & -1 \\-1 & 1\end{array}\right)=A_3-A_2=2(P_1+P_2),$$ implying $$P_2=\frac{1}{2}\left(\begin{array}{cc}\alpha_2 & -1 \\-1 & 1\end{array}\right).$$  Plugging all of this into the equation for $A_1$ we  get $$\left(\begin{array}{cc}1 & 0 \\0 & -1\end{array}\right)=\left(\begin{array}{cc}\lambda\alpha_1-\alpha_2/2+\alpha_4 & 1/2 \\1/2 & -1/2\end{array}\right) -P_3,$$ implying  $$P_3=\left(\begin{array}{cc}\alpha_3 & 1/2 \\1/2 & 1/2\end{array}\right).$$ From $P_2,P_3\geqslant 0$ we obtain  $\alpha_2\geq 1$ and $\alpha_3\geq 1/2.$ From $I_2=P_1+P_2+P_3+P_4$ we thus find $\alpha_1=\alpha_4=0$, so $P_1=P_4=0$,  which contradicts ($\ref{first}$). This proves $\underline A\notin D_2^{\rm pt}\supseteq C_2^{\rm pt}$, and thus settles the case $d=3$.
 
 We now proceed by induction on $d$. Let $C\subseteq\R^d$ be a proper polyhedral cone which is not a simplex cone. Then either $C$ has a facet which is not a simplex cone, or a vertex figure which is not a simplex cone \cite{zi}. 

 In the first case we can assume that $C$ is contained in the halfspace defined by $x_1\leq 0$ and the non-simplex facet $F$ lies in the hyperplane defined by $x_1=0.$ We can apply the induction hypothesis to $F\subseteq \R^{d-1}$ and find $(A_2,\ldots,A_d)\in  F_2^{\rm ph}\setminus F_2^{\rm pt}.$ Then for $\underline A:=(0,A_2,\ldots, A_d)$ we obviously have $\underline A\in C_2^{\rm ph}.$ Now assume $\underline A\in C_2^{\rm pt}$. By looking at the first component in a representation $$(0,A_2,\ldots, A_d)=\sum_i P_i \otimes v_i$$ with $v_i\in C$ we see that $P_i\neq 0$ can only occur for $v_i\in F$. Indeed any $v_i\in C\setminus F$ has a negative first entry, and such terms cannot cancel to yield $0$. So the representation is a representation of $(A_2,\ldots, A_d)$ in $F_2^{\rm pt}$, which does not exist. So we have shown $\underline A\notin C_2^{\rm pt}.$

 In the second case we can assume that the non-simplex vertex-figure $F$ of $C$ is cut out by the hyperplane defined by $ x_1=0$, and further that $v_1$ spans the only extreme ray of $C$ with negative $x_1$-entry, whereas all other generators have a positive first entry (see Figure \ref{fig:verfig} for an illustration).
 \begin{center}
 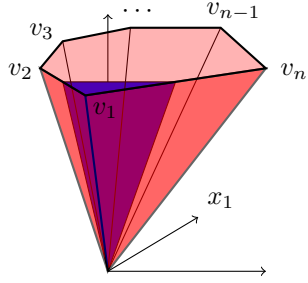
\begin{figure}[h!!]
 \begin{tikzpicture}[xscale=3,yscale=1.8]
    \draw[->] (0.5,0.5) -- (1.2,0.5);
     \draw[->] (0.5,0.5) -- (0.5,2.4);
     \draw[->] (0.5,0.5) -- (0.9,0.9) node[above right] {$x_1$};
      \filldraw[fill=blue] (0.5,0.5)--(0.3,1.9)--(0.8,1.9)--(0.5,0.5);
      \draw[]      (0.3,2.2)  -- (0.5,0.5);
      \draw[]    (0.6,2.3)-- (0.5,0.5);     
      \draw[]   (1,2.3)-- (0.5,0.5);   
      \draw[thick, fill=red, opacity=0.6] (0.5,0.5) -- (0.2,2)--(0.4,1.8)--(0.5,0.5) --(1.2,2)--(0.4,1.8) ;
       \draw[thick, fill opacity=0.3, fill=red]  (0.2,2) -- (0.3,2.2)  -- (0.6,2.3) --(1,2.3)--(1.2,2)-- (0.4,1.8)-- cycle;
 	\put (37,85) {$v_1$}
	 \put (5,100) {$v_2$}
	  \put (13,115) {$v_3$}
	 \put (48,122) {$\cdots$}
	 \put (80,123) {$v_{n-1}$}
	  \put (108,100) {$v_{n}$}
\end{tikzpicture}\caption{vertex figure $F$ (blue) of $C$ (red)}\label{fig:verfig}
\end{figure}
\end{center}
 After scaling the generators $v_i$ we can even assume that the $x_1$-component of $v_1$ is $-1$, and the $x_1$-component of all other $v_i$ is $1$.    Then the cone $F$ is generated by vectors $w_2,\ldots, w_n,$ where each $w_i$ is of the form $$w_i= \frac12v_1+\frac12v_i.$$  Since $F$ is not a simplex cone we can apply  the induction hypothesis  to $F\subseteq \R^{d-1}$ and  again find $(A_2,\ldots, A_d)\in F_2^{\rm ph}\setminus F_2^{\rm pt}.$ As before we now argue that $$\underline A:=(0,A_2,\ldots,A_d)\in C_2^{\rm ph}\setminus C_2^{\rm pt},$$ where  $\underline A \in C_2^{\rm ph}$ is again clear.  So assume for contradiction that $\underline A\in C_2^{\rm pt}$, so  there exists some positive semidefinite $P_i\in {\rm Her}_2$ with  $$(0,A_2,\ldots, A_d)=P_1\otimes v_1+ P_2\otimes v_2+\cdots +P_n\otimes v_n.$$  Since the first entry of this matrix tuple is zero, we get $P_1=P_2+\cdots +P_n$, which implies \begin{align*}\underline A&=  \left(P_2+\cdots +P_n\right)\otimes v_1+ P_2\otimes v_2+\cdots +P_n\otimes v_n\\ &= P_2\otimes (v_1+v_2) +\cdots +P_n\otimes (v_1+v_n) \\ &= 2P_2\otimes w_2+\cdots +2P_n\otimes w_n.\end{align*}
 This contradicts $(A_2,\ldots, A_n)\notin F_2^{\rm pt},$ and finishes the proof.
  \end{proof} 

 \begin{remark}\label{rem:just}
 (i) Let us comment on the difference of the above proof and the proof from \cite{fnt}. First, the main result from \cite{fnt} states that the abstract operator system $C^{\rm ph}$ admits a finite-dimensional realization, whereas $C^{\rm pt}$ does not. This of course implies that they cannot coincide, but gives no result on the level at which the differ. The proof starts in a similar fashion as the above, first settling the case $d=3$. But already here our construction of $\underline A$ is much more explicit and simpler than what was done in \cite{fnt}. The induction step in \cite{fnt} is completely non-constructive and cannot be transformed into an explicit argument. Our argument above is completely constructive. After applying the necessary isomorphisms and induction steps one obtains some explicit $\underline A\in C_2^{\rm ph}\setminus C_2^{\rm pt}.$
 
 (ii) Note that all appearing matrices above are real symmetric. So the difference between the cones appears not only in the Hermitian case, but already when we restrict ourselves to real symmetric matrices.
 \end{remark}

\begin{example} 
We consider the $3$-dimensional square-cone \begin{align*}C&=\left\{ a\in\R^3\mid a_3\pm a_1\geq 0, a_3\pm a_2\geq 0\right\}\\& ={\rm cc}\left\{(1,-1,1),(-1,-1,1),(-1,1,1),(1,1,1) \right\}.\end{align*} We have seen in the proof of Theorem \ref{thm:main} that    $$\underline A=\left(\left(\begin{array}{cc}1 & 0 \\0 & -1\end{array}\right), \left(\begin{array}{cc}0 & 1 \\1 & 0\end{array}\right),\left(\begin{array}{cc}1 & 0 \\0 & 1\end{array}\right)\right)\in C_2^{\rm ph}\setminus C_2^{\rm pt}.$$ So we can see the difference of the two cones for example in the affine subspace $$V:=\left\{ \left( \left(\begin{array}{cc}x & 0 \\0 & -1\end{array}\right),\left(\begin{array}{cc}0 & y \\y & 0\end{array}\right),\left(\begin{array}{cc}1 & 0 \\0 & 1\end{array}\right)\right)\mid x,y\in \R\right\}\subseteq {\rm Her}_s^3.$$ After identifying $V$ with $\R^2$ it is a straightforward computation to see that $$C_2^{\rm ph}\cap V=[-1,1]\times[-1,1].$$ Determining $C_2^{\rm pt}\cap V$ needs some more computation. After imposing all necessary linear constraints on $P_1,P_2,P_3,P_4\in {\rm Her}_2$ to ensure $$P_1\otimes v_1 +P_2\otimes v_2+ P_3\otimes v_3+ P_4\otimes v_4\in V,$$ then using the conditions that all  $P_i$ must be positive semidefinite, and then solving for $x$ and $y$, one gets $$C_2^{\rm pt}\cap V= \left\{ (x,y)\in [-1,1]^2\mid x+2y^2\leq 1\right\}.$$ Figure \ref{fig:sec} shows the two affine sections. The black dot corresponds to the point $\underline A\in C_2^{\rm ph}\setminus C_2^{\rm pt}$ from above.

\begin{center}
\begin{figure}[h!]
\begin{tikzpicture}[scale=1.5]

\draw[->](-1.3,0)--(1.3,0);\put (60,-2) {$x$};
\draw[->](0,-1.3)--(0,1.3);\put (-2,60) {$y$};
\filldraw[red, opacity=0.5]  (1,1) -- (-1,1) -- (-1,-1) -- (1,-1) --cycle;
\draw[red]  (1,1) -- (-1,1) -- (-1,-1) -- (1,-1)  --cycle;
\draw [ domain=-1:1,samples=100,blue] plot (\x, {sqrt((1-\x)/2)});
\draw [  domain=-1:1,samples=100,blue] plot (\x, {-sqrt((1-\x)/2)});
\filldraw [domain=-1:1,samples=100,blue, opacity=0.4] plot (\x, {sqrt((1-\x)/2)});
\filldraw [domain=-1:1,samples=100,blue, opacity=0.4] plot (\x, {-sqrt((1-\x)/2)});
\draw[blue] (-1,-1)--(-1,1);
\fill [blue, opacity=0.4] (-1,-1)--(1,-0.0264)--(1,0.0264)--(-1,1);
\filldraw (1,1) circle (0.8pt); \put (48,42) {$\underline A$};

\end{tikzpicture}
\caption{affine section of $C_2^{\rm ph}$ (red) and $C_2^{\rm pt}$ (blue)}\label{fig:sec}
\end{figure}
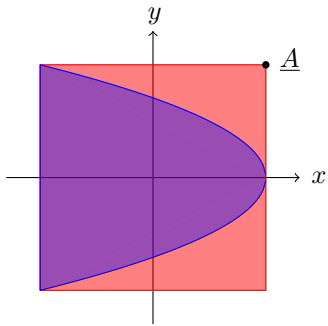
\end{center}
\end{example}

\bibliographystyle{plain}
\bibliography{references}

\end{document}